\def\Tau{\mathcal{T}}
\def\F{\mathcal{F}}
\DeclareMathOperator{\Gau}{Gau}
\DeclareMathOperator{\Tait}{Tai}
\newtheorem{lemma}{Lemma}
\newtheorem{theorem}{Theorem}
\newtheorem{remark}{Remark}
\newtheorem{proposition}{Proposition}
\newtheorem{conclusion}{Corollary}
\date{}
\author{
Ilyas Kalimullin 
\and
Eduard Lerner\thanks{Supported by RSF (project No. 24-21-00158).}
}
\title{The $\alpha$-representation for Tait coloring and sums over spanning trees}
\begin{document}

\maketitle

\begin{abstract}
Consider a connected pseudograph $H$ such that each edge is associated with weight $x_e$, 
$x_e\in\mathbb F_3$; $\Tau(H)$ is the set of spanning trees of graph $H$. Assume that
$s(H;{\mathbf x})=\sum_{T\in\Tau(H)} \prod_{e\in E(T)} x_e$. Let $G$ be a maximal planar graph (arbitrary planar triangulation) such that each face $F$ is assigned the value $\alpha(F)=\pm1 \in \mathbb{F}_3$. Then we can associate each edge with $x_e=\alpha(F'_e)+\alpha(F''_e)$, where $F'_e$ and $F''_e$ are the faces containing edge~$e$. Let us define the value $w_G({\mathbf x})$ as $\left(\frac{s(G/W^*({\mathbf x});\,{\mathbf x})}3\right)/(-3)^{\left(|V(G/W^*({\mathbf x}))| - 1\right)/2}$; here $\left(\frac{x}3\right)$ is the Legendre symbol, $G/W$ is the graph with the contracted set of vertices~$W$, while $W^*({\mathbf x})$ is a set of vertices~$W$, $W \subseteq V(G)$, with minimal cardinality such that $s(G/W;{\mathbf x})$ differs from zero. In the following, we prove that the number of Tait colorings for graph~$G$ equals the tripled sum $w_G({\mathbf x}(\alpha))$ with respect to all possible vectors $\alpha \in \{-1, 1\}^{\F(G)}$ such that $G/W^*({\mathbf x}(\alpha))$ has an odd number of vertices, where $\F(G)$ is the set of faces of graph $G$.
\end{abstract}

\section{Introduction}
The idea of this work has a long history.
Let notation $\Tau(G)$ stand for the set of spanning trees of connected graph~$G$. Consider sums
\begin{equation} \label{eq:2}
s(G;{\mathbf x})=\sum_{T \in \Tau(G)}
  {\prod_{e\in E(T)} {x_e } },
\end{equation}
where $x_e$ are elements of finite field~${\mathbb F}_q$.
In December 1997, when giving a talk at the Gelfand Seminar at Rutgers University,
Maxim Kontsevich proposed the conjecture that the number of non-zero values of~\eqref{eq:2}
for ${\mathbf x}  \in \mathbb{F}_q^{E(G)}$ is a polynomial with respect to~$q$.
This conjecture was inspired by studying analogous sums (with real positive $x_e$) in quantum field theory. Although this conjecture was never published,
it has aroused the interest of experts in combinatorics (see~\cite{stanleyArticle,chung,Stembridge}).
Sometime later, this conjecture was refuted~\cite{belk}.
Note that in the refuted conjecture one actually considers the sum of weights $w_G({\mathbf x})$ that equal one when $s(G;{\mathbf x})$ differs from zero. A proper weight $w_G({\mathbf x})$ is related to the value of a multidimensional Gaussian sum (an analog of the Gaussian integral) over a finite field such that the Laplace--Kirchhoff matrix of quadratic form is parameterized by values~${\mathbf x}$. For example, in the case of field~$\mathbb F_3$, an analog of Gaussian integral obeys formula~\eqref{Gau3} given below.
If we apply these formulas and carefully adjust the techniques of the so-called $\alpha$-representation, which are used in quantum theory in the case of a real field, to the case of a finite one, then we obtain a new representation for the flow polynomial of graph~\cite{EJC}. Moreover, this representation allows for a generalization for the case of an arbitrary matroid representable over field~${\mathbb F}_q$~\cite{arXiv}. 
In the case of a regular matroid, this formula is even simpler.

The goal of this paper is to obtain the $\alpha$-representation for the number of Tait colorings for an arbitrary maximal planar graph. For the cubic graph dual to the considered one, this representation was recently obtained in~\cite{arXiv}. In our case, there occur sums with respect to spanning trees, which unites this representation with the initial Kontsevich conjecture.

Let us now state the main result. We will consider not only the sums~\eqref{eq:2} for initial simple graphs~$G$, but also sums~$s(H;{\mathbf x})$ for pseudographs~$H$ obtained from graph~$G$ by contracting all the vertices that belong to set~$W$,
$W\subseteq V(G)$; denote the pseudograph $H$ as $G/W$. 
Evidently, the value $s(H;{\mathbf x})$ is independent of loops of graph~$H$ (as distinct from multiple edges).
Let notation $W^*({\mathbf x})$ stand for an arbitrary set of vertices $W$ with minimal cardinality such that sum $s(G/W;{\mathbf x})$ differs from zero.

In what follows, we consider only the field $\mathbb F_3$ of three elements $\{-1,0,1\}$. In such notation of elements, the {\it Legendre symbol}
$\left(\frac{x}3\right)$, $x\in\mathbb F_3$, coincides with the corresponding real value~$x$. Assume that for arbitrary ${\mathbf x}\in 
\mathbb{F}_3^{E(G)}$ the weight $w_G({\mathbf x})$ obeys the formula 
\begin{equation}
\label{Gau3}
w_G({\mathbf x})=\left(\frac{s(G/W^*({\mathbf x});{\mathbf x})}3\right)/(-3)^{\left(|V(G/W^*({\mathbf x}))| - 1\right)/2}.
\end{equation}
Here $\sqrt{-3}= - i\sqrt{3}$, although this fact does not affect the statement of the main theorem, because it contains only integer powers of $(-3)$.
In the following, we prove that the weight $w_G({\mathbf x})$ is independent of the choice of sets $W^*({\mathbf x})$ (with equal cardinalities).

Let $G$ be {\it a maximal planar graph}, i.e., a planar graph such that each face is a triangle. Recall that {\it a Tait coloring} is a coloring of all edges of graph~$G$ in three colors so that edges of one and the same face are colored differently. The existence of such a coloring for any~$G$ is equivalent to the assertion of the Four Color Theorem.
Denote {\it the number of Tait colorings} for graph~$G$ as $\Tait(G)$. Evidently, $\Tait(G)$ is a multiple of 3. We need the value $\Tait_0(G)=\Tait(G)/3$.

Let notation $\F(G)$ stand for {\it the set of faces} of graph~$G$. 
Let us associate each face $F$ with the variable $\alpha(F)$ which takes on values in the set $\{1,-1\}$ of invertible elements of field $\mathbb F_3$ 
({\it below we denote this set as $\mathbb F^*_3$}). The vector $(\alpha(F), F\in \F(G))$ corresponds to
${\mathbf x}(\alpha)=(x_e, e\in E(G))$, where $x_e=\alpha(F'_e)+\alpha(F''_e)$; here
$F'_e$ and $F''_e$ are faces containing edge~$e$. 

\begin{theorem}
\label{th:main}
The following formula is valid:
$\Tait_0(G)= \sum w(G;{\mathbf x}(\alpha))$;
the sum is calculated with respect to all vectors $\alpha\in\left(\mathbb F_3^*\right)^{\F(G)}$ such that $G/W^*({\mathbf x}(\alpha))$ has an odd number of vertices.
\end{theorem}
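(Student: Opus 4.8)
The plan is to introduce, for each $\alpha\in(\mathbb F_3^*)^{\F(G)}$, the finite Gaussian sum
\[
Z(\alpha)=\sum_{\varphi\colon V(G)\to\mathbb F_3}\omega^{\,Q_\alpha(\varphi)},\qquad
Q_\alpha(\varphi)=\sum_{e=\{u,v\}\in E(G)}x_e\,(\varphi(u)-\varphi(v))^2 ,
\]
where $\omega=e^{2\pi i/3}$ and $x_e=x_e(\alpha)$ is as in the statement, and then to evaluate $\sum_\alpha Z(\alpha)$ in two different ways. The matrix of the quadratic form $Q_\alpha$ is the weighted Laplace--Kirchhoff matrix of $(G,\mathbf x(\alpha))$. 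Diagonalizing it over $\mathbb F_3$, using the matrix--tree theorem to identify the determinant of its nondegenerate part with $s\big(G/W^*(\mathbf x(\alpha));\mathbf x(\alpha)\big)$ and the dimension of its radical with $|V(G)|-|V(G/W^*(\mathbf x(\alpha)))|+1$ (this is exactly the well-definedness of $w_G$ recalled above), and invoking $\big(\sum_{t\in\mathbb F_3}\omega^{t^2}\big)^2=-3$ together with~\eqref{Gau3}, one gets
\[
Z(\alpha)=3^{|V(G)|}\,w_G(\mathbf x(\alpha))\ \text{ if }|V(G/W^*(\mathbf x(\alpha)))|\text{ is odd},\qquad
Z(\alpha)\in\sqrt{-3}\cdot\mathbb Q\ \text{ otherwise.}
\]
It is essential here that in the first case $|V(G/W^*(\mathbf x(\alpha)))|-1$ is even, so $(\sqrt{-3})^{|V(G/W^*)|-1}=(-3)^{(|V(G/W^*)|-1)/2}$ is a real integer and the parasitic overall sign $(-1)^{|V(G/W^*)|-1}$ equals $1$; this is why, as remarked after~\eqref{Gau3}, the sign conventions around $\sqrt{-3}$ are harmless.

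On the other hand, exchanging the order of summation and using that each edge of $G$ lies on exactly two faces while $\mathbb F_3^*=\{1,-1\}$, one finds
\[
\sum_\alpha Z(\alpha)=\sum_{\varphi\colon V(G)\to\mathbb F_3}\ \prod_{F\in\F(G)}\big(\omega^{D_F(\varphi)}+\omega^{-D_F(\varphi)}\big),
\]
where $D_F(\varphi)$ is the number of edges of the triangle $F$ whose endpoints get different values of $\varphi$; since $D_F(\varphi)\in\{0,2,3\}$, the $F$-factor equals $2$ when $\varphi$ is constant or injective on $F$ and $-1$ otherwise, so $\sum_\alpha Z(\alpha)$ is an integer. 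Because the terms with $|V(G/W^*(\mathbf x(\alpha)))|$ even are rational multiples of $\sqrt{-3}$, they cancel, and we are left with
\[
3^{|V(G)|}\sum_{\alpha}w_G(\mathbf x(\alpha))
=\sum_{\varphi\colon V(G)\to\mathbb F_3}\ \prod_{F\in\F(G)}\big(3\,[\,\varphi|_F\text{ constant or injective}\,]-1\big),
\]
the left-hand sum being over exactly the $\alpha$ admitted in the theorem.

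Everything therefore reduces to the identity
\[
\sum_{\varphi\colon V(G)\to\mathbb F_3}\ \prod_{F\in\F(G)}\big(3\,[\,\varphi|_F\text{ constant or injective}\,]-1\big)=3^{|V(G)|}\,\Tait_0(G),
\]
which I expect to be the crux of the proof. The route I would take is to pass to the cubic planar dual $H=G^*$: its vertices are the faces of $G$, its edges $e^*$ correspond to the edges $e$ of $G$, Tait colorings of $G$ are exactly proper $3$-edge-colorings of $H$, and planar duality gives $s(G;\mathbf x)=\sum_{T\in\Tau(H)}\prod_{e\colon e^*\notin T}x_e$, i.e.\ $s(G;\cdot)$ is the cotree generating polynomial of $H$. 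The left-hand side above is then a product of local weights summed over $\mathbb F_3$-colorings of the faces of the trivalent planar graph $H$, and one matches it to $3^{|V(G)|}\Tait_0(H)$ by combining the $\alpha$-representation of Tait colorings already established for cubic graphs in~\cite{arXiv} with that duality. The subtle point is to reconcile the degenerate cases of~\eqref{Gau3} --- the minimal contracted sets $W^*$ and the parity of $|V(G/W^*)|$ --- with the normalization used in the cubic case; I would first verify the identity directly on small triangulations to fix the bookkeeping (for $K_4$ both sides equal $3^{4}\cdot 2$). A more self-contained but more laborious alternative is an inclusion--exclusion expansion of the product over disjoint pairs of face-sets $(A,B)$, using $[\,\varphi|_F\text{ constant or injective}\,]=[\,\varphi|_F\text{ constant}\,]+[\,\varphi|_F\text{ injective}\,]$ and the fact that all terms where $A\cup B$ spans too much of $G$ vanish (a maximal planar graph with at least four vertices contains $K_4$, hence admits no proper $3$-coloring), after which the surviving terms should collect into $3^{|V(G)|}\Tait_0(G)$.

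Dividing the last displayed identity by $3^{|V(G)|}$ yields $\sum_\alpha w_G(\mathbf x(\alpha))=\Tait_0(G)$ over the prescribed range of $\alpha$, which is the theorem.
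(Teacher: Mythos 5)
Your first half is sound and is essentially the paper's own computation run in reverse: your $Z(\alpha)$ is the Gaussian sum $\Gau(L(G;{\mathbf x}(\alpha)))$, its evaluation via diagonalization and the matrix--tree theorem is Lemmas~\ref{lem:gauss} and~\ref{lem:Laplas}, and your observation that the terms with $|V(G/W^*({\mathbf x}(\alpha)))|$ even are rational multiples of $\sqrt{-3}$ and must therefore cancel against the integrality of $\sum_\alpha Z(\alpha)$ is a legitimate substitute for the paper's $\alpha\mapsto-\alpha$ cancellation (Corollary~\ref{concl3}). The problem is that you stop exactly where the combinatorial content begins: the identity
\[
\sum_{\varphi\colon V(G)\to\mathbb F_3}\ \prod_{F\in\F(G)}\bigl(3\,[\varphi|_F\ \text{constant or injective}]-1\bigr)=3^{|V(G)|}\,\Tait_0(G)
\]
is declared ``the crux'' and left unproven. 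Neither of your proposed routes closes it. The detour through the cubic dual and~\cite{arXiv} is not carried out and would in any case import essentially the statement being proved; the inclusion--exclusion alternative rests on a false claim, since it is not true that every maximal planar graph with at least four vertices contains $K_4$ or fails to be properly $3$-colorable --- the octahedron $K_{2,2,2}$ is a planar triangulation, contains no $K_4$, and is $3$-chromatic.

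What you are missing is that this unproven identity is exactly Heawood's theorem (Proposition~\ref{th:heawood}) in Fourier-transformed disguise, which is where the paper's proof starts. Over $\mathbb F_3$ one has, for a triangular face $F$, the identity $\sum_{e\subset F}(\varphi(u)-\varphi(v))^2=-\bigl(\sum_{v\in F}\varphi(v)\bigr)^2$, so ``$\varphi|_F$ constant or injective'' is equivalent to $\sum_{v\in F}\varphi(v)=0$ and your $F$-factor equals $3\delta\bigl(\sum_{v\in F}\varphi(v)\bigr)-1$. Expanding each such factor by~\eqref{alpha} as $\sum_{\sigma(F)\in\mathbb F_3^*}\exp\bigl(\tfrac{2\pi i}{3}\,\sigma(F)\sum_{v\in F}\varphi(v)\bigr)$ and swapping the sums over $\varphi$ and $\sigma$ turns the left-hand side into $3^{|V(G)|}$ times the number of spin assignments $\sigma\in(\mathbb F_3^*)^{\F(G)}$ satisfying $\sum_{F\ni v}\sigma(F)=0$ at every vertex, which is $3^{|V(G)|}\Tait_0(G)$ by Proposition~\ref{th:heawood}. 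With that step supplied, your argument closes and coincides, up to the direction in which it is read, with the paper's proof; without it, the proposal is a correct reduction rather than a proof.
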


\begin{figure}[ht]
    \centering
    \begin{subfigure}{.3\textwidth}
        \centering
        \includegraphics[width=.9\linewidth]{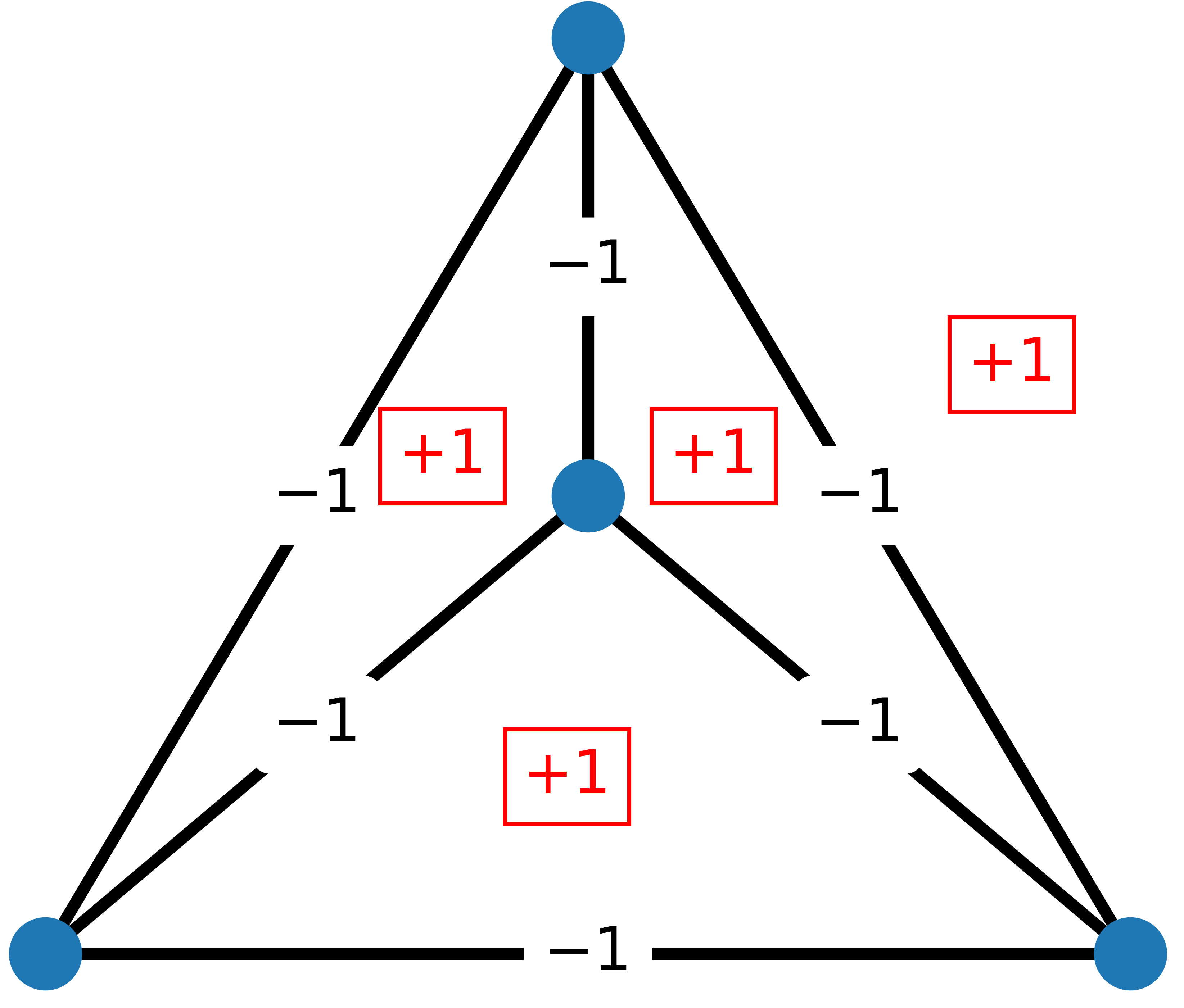}
        \caption{Case 1 of vector $\alpha$ for $K_4$}
        \label{fig:k_4_case_1}
    \end{subfigure}
    \begin{subfigure}{.3\textwidth}
       \centering
        \includegraphics[width=.9\linewidth]{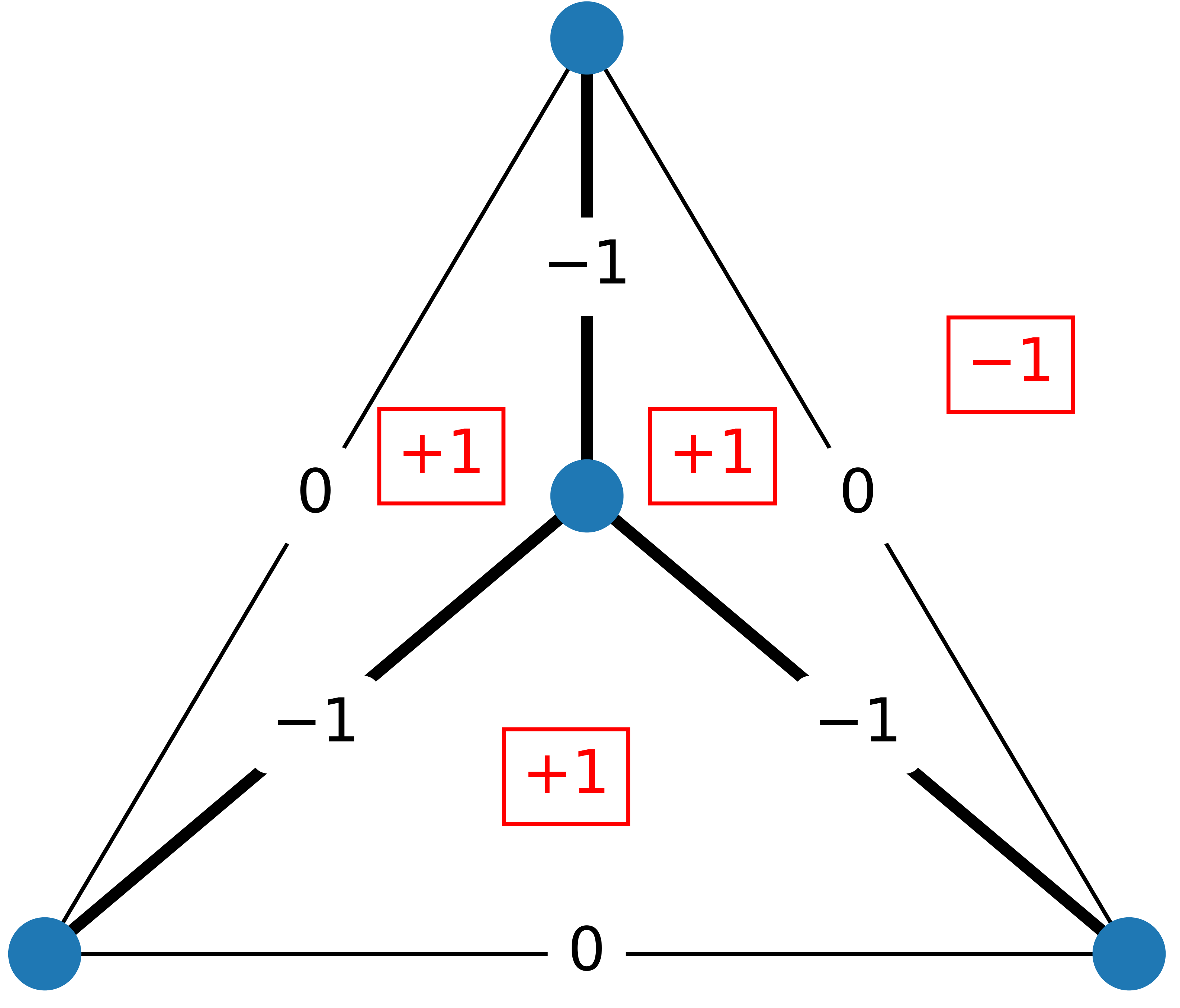}
        \caption{Case 2 of vector $\alpha$ for $K_4$}
        \label{fig:k_4_case_2}
    \end{subfigure}
    \begin{subfigure}{.3\textwidth}
        \centering
        \includegraphics[width=.9\linewidth]{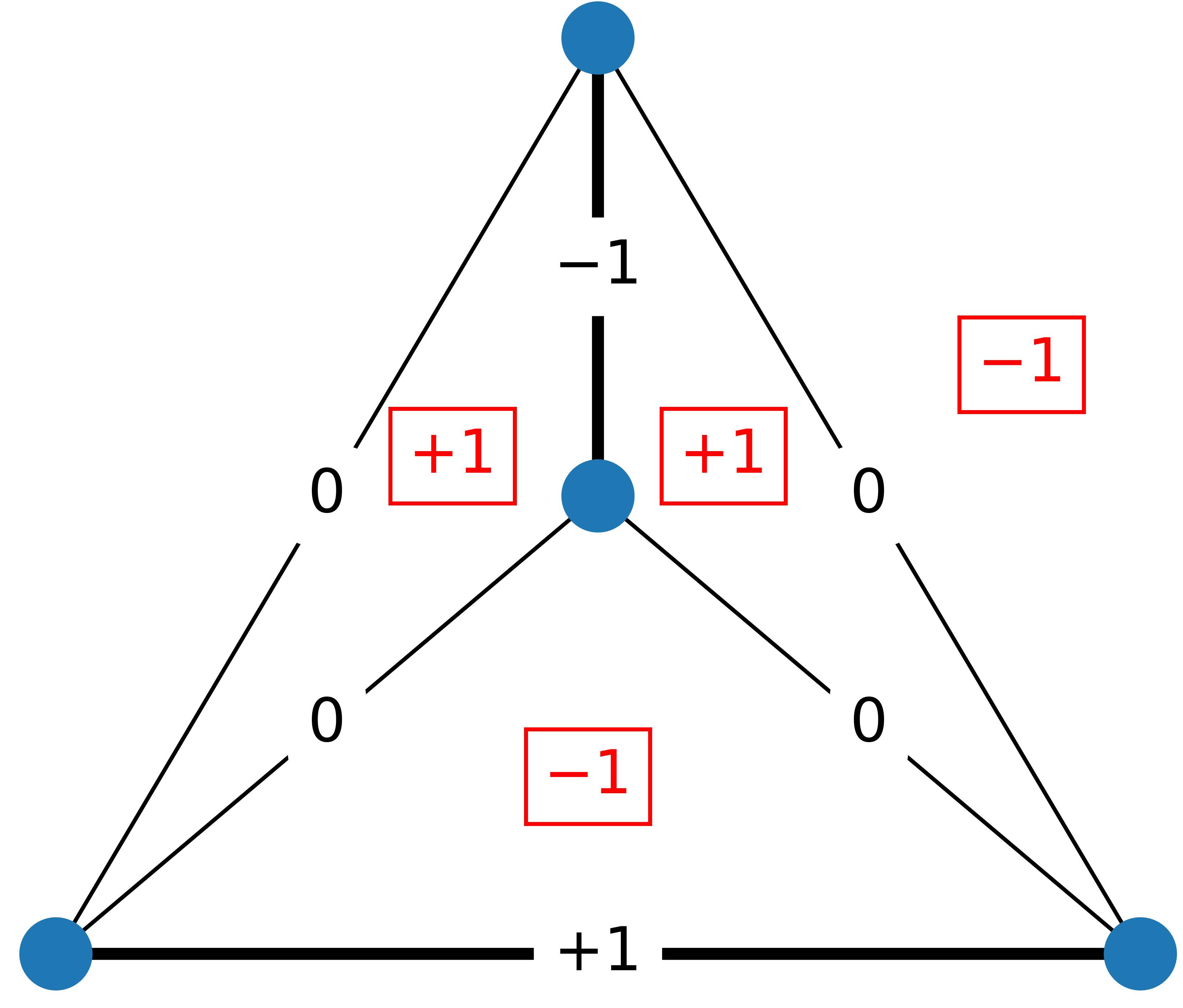}
        \caption{Case 3 of vector $\alpha$ for $K_4$}
        \label{fig:k_4_case_3}
    \end{subfigure}
    \caption{Three cases of vector $\alpha$ for $K_4$}
    \label{fig:k_4}
\end{figure}

For example, in the case of $K_4$, there are 16 vectors $\alpha \in \left(\mathbb{F}_3^*\right)^{\F(K_4)}$ that fall into three cases:

\begin{enumerate}
    \item If $\alpha$ is the same for all faces (see Fig.~\ref{fig:k_4_case_1}), e.g. if $\alpha=(1,1,1,1)$, then for every edge~$e$, $x_e=-1$. Thus, for every spanning tree $T \in \Tau(K_4)$, $\prod_{e\in T}x_e=-1$, and $s(K_4;\mathbf{x}(\alpha))=-1\cdot16=-1$. Therefore, a vertex contraction is not needed: $W^*=\emptyset$ and $K_4/W^*=K_4$ which has an even number of vertices, meaning that it does not change the final sum.
    \item If $\alpha$ differs for only one face (see Fig.~\ref{fig:k_4_case_2}), then three out of six edges have values $x_e=0$ and there is exactly one spanning tree $T'\in\Tau(K_4)$ such that $\prod_{e\in T'}x_e\neq0$, meaning that $s(K_4;\mathbf{x}(\alpha))=\prod_{e\in T'}x_e\neq0$. Once again, no vertex contraction is needed, and values $w_{K_4}({\mathbf x}(\alpha))$ and $w_{K_4}({\mathbf x}(-\alpha))$ cancel each other out.
    \item Finally, if two faces have the value $+1$ and the other two have $-1$ (see Fig.~\ref{fig:k_4_case_3}), then $s(K_4;\mathbf{x}(\alpha))=0$ and two vertices contraction is needed. In this case there is exactly one spanning tree $T'\in\Tau(K_4/W^*)$ such that a product $\prod_{e\in T'}x_e$ is non-zero. It will always have value $-1 \cdot1=-1$, meaning that $s(K_4/W^*;\mathbf{x}(\alpha))=-1$. Therefore, $w_{K_4}(\mathbf{x}(\alpha))= \left( -1 \right) / (-3)=\frac 1 3$. There are exactly ${4 \choose 2}=6$ values of $\alpha$ that fall to this case, thus $\Tait_0(K_4) = 6 \cdot \frac 1 3 = 2$.
\end{enumerate}

Note that the Four Color Theorem allows for an elegant algebraic statement in terms of the {\it graph polynomial} of~$G$ (this notion was introduced by N.Alon and M.~Tarsi in~\cite{alon1}, though the main variant of Theorem~1.1 in this paper was proposed earlier by Yu.~V.~ Matiyasevich in~\cite{matDA}). In particular $\Tait(G)$ coincides with certain coefficients of the graph polynomial of the line graph of~$\widetilde G$, where $\widetilde G$ is the dual graph to~$G$ (see~\cite{sheim}, \cite{alon1}, \cite{POMI}). See related bibliographic references in~\cite{lastMatiyas}.

\section{Gaussian sums and the Laplace--Kirchhoff matrix}

In the case of a real field, the application of the classical $\alpha$-representation implies the use of explicit formulas for the calculation of Gaussian integrals with the imaginary unit in the exponent. In the case of finite fields, we use multidimensional Gaussian sums. The statement of Theorem~\ref{th:main} contains explicit formulas that are valid in the case of field~$\mathbb F_3$.

Assume that $C$ is an arbitrary symmetric matrix $n\times n$, whose elements belong to~$\mathbb F_3$, and 
${\mathbf y}^T C {\mathbf y}$ is a quadratic form with this matrix (${\mathbf y}$ is a vector column of the corresponding dimension). 
The following sum represents an analog of the multidimensional Gaussian integral:
$$
\Gau(C)=\sum_{{\mathbf y}\in \mathbb F_3^n} \exp( 2\pi i\, {\mathbf y}^T C {\mathbf y}/3).
$$
In the case $n=1$, we get the so-called 
quadratic Gaussian sum $g(c)=\sum_{y\in \mathbb F_3} \exp( 2\pi i\,c y^2/3)$.
By elementary calculations, we make sure that $g(0)=3$, otherwise $g(c)=\left(\frac{c}{3}\right)i\sqrt{3}$.
See~\cite{ai} for the historical background of the calculation of the quadratic Gaussian sum for an arbitrary field $\mathbb F_p$, where $p$ is a prime number, $p>2$; see~\cite[Theorem~5.15]{lidlNider} for the general case of field~$\mathbb F_q$, $q=p^d$.

\begin{remark}
\label{rem:cong}
If matrices $C$ and $A$ are congruent, i.e., $A=P^T C P$, where $P$ is a non-singular $n\times n$ matrix, then $\Gau(C)=\Gau(A)$.
\end{remark}

Remark~\ref{rem:cong} is valid because by putting ${\mathbf y}'= P {\mathbf y}$ we reduce the sum $\Gau(A)$ to $\Gau(C)$.

\begin{lemma}[a particular case of Lemma~8 in~\cite{EJC} for the field $\mathbb F_3$]
\label{lem:gauss}
Consider a symmetric $n\times n$ matrix $C$ of rank~$r$, whose elements belong to $\mathbb F_3$; let $\det C_r$ be an arbitrary non-zero principal minor of order $r$ of the matrix $C$. 
The following formula is valid:
\begin{equation}
\label{eq:gauss}
\frac{\Gau(C)}{3^n} =\left( \frac{\det C_r}3\right) \left[\frac{i}{\sqrt{3}} \right]^r.
\end{equation}
\end{lemma}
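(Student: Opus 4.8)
The plan is to reduce the general symmetric matrix to a diagonal one by congruence and then factor the Gaussian sum into one-dimensional quadratic sums. First I would invoke the fact that over $\mathbb F_3$ (a field of characteristic different from $2$) every symmetric matrix $C$ is congruent to a diagonal matrix $D=\operatorname{diag}(d_1,\dots,d_n)$; the standard symmetric Gauss elimination produces such a nonsingular $P$, using the characteristic-$\neq 2$ trick to create a nonzero diagonal pivot whenever only off-diagonal entries survive. Since congruence preserves rank, exactly $r$ of the $d_i$ are nonzero and $n-r$ vanish. By Remark~\ref{rem:cong}, $\Gau(C)=\Gau(D)$, so it suffices to evaluate $\Gau(D)$.

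Next I would exploit that for diagonal $D$ the quadratic form splits, $\mathbf y^T D\mathbf y=\sum_i d_i y_i^2$, whence the exponential sum factors as a product of single-variable quadratic Gaussian sums, $\Gau(D)=\prod_{i=1}^n g(d_i)$. Applying the evaluations $g(0)=3$ and $g(c)=\left(\frac c3\right)i\sqrt3$ for $c\neq0$ stated above, and grouping the $n-r$ zero pivots separately from the $r$ nonzero ones, gives $\Gau(D)=3^{\,n-r}(i\sqrt3)^{\,r}\prod_{d_i\neq0}\left(\frac{d_i}3\right)$. Multiplicativity of the Legendre symbol turns the product into $\left(\frac{\prod_{d_i\neq0}d_i}3\right)=\left(\frac{\det D_r}3\right)$, where $D_r$ is the nonsingular $r\times r$ diagonal block. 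Dividing by $3^n$ and using $(i\sqrt3)^r/3^r=(i/\sqrt3)^r$ yields exactly~\eqref{eq:gauss}, but with the particular minor $\det D_r$ in place of $\det C_r$.

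The remaining and genuinely delicate point is that the formula is asserted for an \emph{arbitrary} nonzero principal minor $\det C_r$ of the original matrix, so I must show that the Legendre symbol $\left(\frac{\det C_r}3\right)$ does not depend on this choice and agrees with $\left(\frac{\det D_r}3\right)$. Here I would argue via the orthogonal splitting of the bilinear form: if the principal submatrix indexed by a set $S$ with $|S|=r$ is nonsingular, then the coordinate subspace $V_S$ is nondegenerate, hence a complement to the radical $V^\perp$ of the form, and $V=V_S\oplus V_S^{\perp}$ is an orthogonal decomposition in which $V_S^{\perp}$ carries the zero form (since the total rank already equals $\dim V_S=r$). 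Thus $C$ is congruent to the block form $C_r\oplus 0$. Two admissible choices $S,S'$ then give nondegenerate restrictions $C_r,C_{r'}$, each isomorphic to the induced nondegenerate form on $V/V^\perp$; therefore $\det C_{r'}=t^2\det C_r$ for some $t\in\mathbb F_3^*$, and likewise $\det D_r$ lies in the same square class. Since the Legendre symbol is constant on square classes, $\left(\frac{\det C_r}3\right)=\left(\frac{\det D_r}3\right)$ for every admissible minor, which finishes the proof. I expect this invariance step --- identifying the common square class of all nonzero principal $r$-minors --- to be the main obstacle, whereas the diagonalization and the factorization of $\Gau(D)$ are routine.
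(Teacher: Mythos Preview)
Your proof is correct and follows the same route as the paper's sketch: reduce to the diagonal case by congruence (Remark~\ref{rem:cong}), factor $\Gau(D)$ into one-dimensional quadratic Gaussian sums, and apply multiplicativity of the Legendre symbol. The only difference is organizational: you establish the invariance of $\left(\tfrac{\det C_r}{3}\right)$ directly via the orthogonal decomposition $V=V_S\oplus V_S^{\perp}$, whereas the paper records this invariance afterward as Corollary~\ref{concl2}, deduced from the fact that the left side of~\eqref{eq:gauss} is intrinsic to $C$.
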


In view of Remark~\ref{rem:cong} the proof of Lemma~\ref{lem:gauss} is reduced to considering the diagonal case
(see~\cite[Chapters IV]{serre} for the reduction of a quadratic form over a finite field to the diagonal form). 
By factorization, we reduce the diagonal case to the one-dimensional variant considered above.
The multiplicative property of the Legendre symbol allows us to write the final result in the form~\eqref{eq:gauss}.

\begin{conclusion}
\label{concl2}
For any symmetric matrix~$C$ of rank~$r$, the value $\left( \frac{\det C_r}3\right)$ is independent of the choice of $C_r$. 
\end{conclusion}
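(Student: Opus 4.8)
The plan is to read the statement off directly from Lemma~\ref{lem:gauss}. Fix the symmetric matrix $C$ of rank $r$. The quantity $\Gau(C)$ depends only on $C$: its definition is a sum over $\mathbb F_3^n$ that makes no reference to any submatrix, so $\Gau(C)/3^n$ is intrinsic to $C$. On the right-hand side of \eqref{eq:gauss}, the factor $\left[\frac{i}{\sqrt 3}\right]^r$ depends only on the rank $r$, which is itself an invariant of $C$. Hence, for \emph{any} admissible choice of a non-zero principal minor $\det C_r$ of order $r$, one has
\[
\left(\frac{\det C_r}3\right)=\frac{\Gau(C)}{3^n}\,\left[\frac{i}{\sqrt 3}\right]^{-r},
\]
and the right-hand side does not involve the choice of $C_r$. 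This is exactly the claim.

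The one point that deserves a word is that the set of admissible choices is non-empty, i.e., that a symmetric matrix of rank $r$ over $\mathbb F_3$ has at least one non-zero principal minor of order $r$. This is the standard fact that over a field of characteristic $\neq 2$ a symmetric bilinear form of rank $r$ restricts non-degenerately to some coordinate subspace of dimension $r$; it is already tacitly used in the statement of Lemma~\ref{lem:gauss}. If a self-contained argument bypassing Gaussian sums were wanted, I would instead argue purely with quadratic forms: letting $U\subseteq\mathbb F_3^n$ be the coordinate subspace indexed by $C_r$, on which the form is non-degenerate, one has $\mathbb F_3^n=U\oplus U^{\perp}$, and comparing ranks shows the form vanishes identically on $U^{\perp}$; thus $C$ is congruent to $C_r\oplus 0_{n-r}$, so $\det C_r$ modulo $(\mathbb F_3^*)^2$ — equivalently the Legendre symbol $\left(\frac{\det C_r}3\right)$ — equals the discriminant of the non-degenerate part of $C$, an invariant of $C$ alone.

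There is essentially no obstacle here: the whole content is carried by Lemma~\ref{lem:gauss}, and the corollary is just the observation that its left-hand side is manifestly independent of the choice of minor. The only care required is the (standard) existence of a non-zero principal minor of order equal to the rank, and — in the alternative route — the routine verification that $U^{\perp}$ lies in the radical and has the expected dimension, which is immediate from the rank equality.
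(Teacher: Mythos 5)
Your proposal is correct and takes essentially the same route as the paper, which states this as an immediate corollary of Lemma~\ref{lem:gauss}: the left-hand side $\Gau(C)/3^n$ of \eqref{eq:gauss} is intrinsic to $C$, the factor $\left[i/\sqrt{3}\right]^r$ depends only on the rank, so $\left(\frac{\det C_r}{3}\right)$ is forced to be the same for every admissible $C_r$. Your supplementary remarks on the existence of a non-zero principal $r\times r$ minor and the alternative argument via congruence to $C_r\oplus 0_{n-r}$ are sound but not needed beyond what the paper already relies on.
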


\begin{conclusion}
\label{concl3}
Assume that all elements of a symmetric matrix $C$ are linear functions of a certain set of variables $\alpha\in\mathbb (F_3^*)^k$, while $r(C(\alpha))$ is the rank of this matrix. 
Then
$$
\sum_{\substack{
\alpha:\ \alpha\in (F_3^*)^k, \\
r(C(\alpha))\bmod 2 = 1}} \Gau(C(\alpha)) =0.
$$
\end{conclusion}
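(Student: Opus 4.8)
The plan is to read off the shape of each summand from Lemma~\ref{lem:gauss} and then exploit the symmetry $\alpha\mapsto-\alpha$. Throughout I will use, as is the case for the Laplace--Kirchhoff matrices that occur later, that the entries of $C$ depend \emph{homogeneously} linearly on $\alpha$, so that $C(-\alpha)=-C(\alpha)$; this is the only structural input the argument needs beyond Lemma~\ref{lem:gauss}. First step: fix $\alpha$ with $r:=r(C(\alpha))$ odd. By Lemma~\ref{lem:gauss}, $\Gau(C(\alpha))=3^{\,n-r/2}\left(\frac{\det C_r}{3}\right) i^{\,r}$, where $C_r$ is any nonvanishing principal minor of order $r$. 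Here $3^{\,n-r/2}$ is a positive real number, $\left(\frac{\det C_r}{3}\right)=\pm1$ since $\det C_r$ is an invertible element of $\mathbb F_3$, and $i^{\,r}\in\{\,i,-i\,\}$ because $r$ is odd. Hence $\Gau(C(\alpha))$ is a nonzero purely imaginary number, and therefore so is $S:=\sum_{\alpha:\,r(C(\alpha))\ \mathrm{odd}}\Gau(C(\alpha))$.

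Second step: the map $\alpha\mapsto-\alpha$ is a bijection of $(\mathbb F_3^*)^k$, and since $C(-\alpha)=-C(\alpha)$ it preserves the rank, hence it permutes the index set of $S$. Moreover, reading exponents as integers, $\mathbf y^T(-C)\mathbf y=-\mathbf y^T C\mathbf y$ and $\overline{\exp(2\pi i\,t/3)}=\exp(-2\pi i\,t/3)$, so $\Gau(-C(\alpha))=\overline{\Gau(C(\alpha))}$. Substituting $-\alpha$ for $\alpha$ in the sum therefore gives $S=\sum\overline{\Gau(C(\alpha))}=\overline{S}$, i.e., $S$ is real.

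Conclusion: $S$ is simultaneously real and purely imaginary, hence $S=0$, which is the assertion. I do not expect a genuine obstacle here; the only points requiring care are that the dependence of $C$ on $\alpha$ be homogeneous (for a merely affine dependence the rank-parity symmetry $\alpha\leftrightarrow-\alpha$ may break down and the statement can fail), and the harmless remark that $\Gau(\cdot)$ is well defined because the exponent matters only modulo~$3$.
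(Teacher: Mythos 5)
Your proposal is correct and rests on the same key idea as the paper's own proof: the involution $\alpha\mapsto-\alpha$ (using, as you rightly stress, that the dependence is homogeneous linear, so $C(-\alpha)=-C(\alpha)$) combined with Lemma~\ref{lem:gauss}. The paper obtains the cancellation term by term, from $\det(-C_r)=(-1)^r\det C_r$ and $\left(\tfrac{-1}{3}\right)=-1$, so that $\Gau(C(-\alpha))=-\Gau(C(\alpha))$ and the sum equals its own negative; you reach the same conclusion by observing $\Gau(-C)=\overline{\Gau(C)}$ and that each odd-rank term is purely imaginary, which is an equivalent argument since $\overline{z}=-z$ for purely imaginary $z$.
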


\begin{proof}
Let us replace $\alpha$ in the sum under consideration by $-\alpha$. 
Note that when calculating $\Gau(C(-\alpha))$ we replace the sign of the value $\det C_r$ with the opposite (i.e., use the term $(-1)^r \det C_r$). 
Therefore, the considered sum equals itself with the opposite sign.
\end{proof}

Let now $G$ be an arbitrary multigraph and let $L(G;\lambda)$ be a weighted Laplace--Kirchhoff matrix of graph~$G$, i.e., $L(G;\lambda)=B \Lambda B^T$,
where $B$ is the oriented incidence matrix, while $\Lambda$ is the diagonal matrix, whose diagonal elements are equal to $\lambda_e$.

\begin{lemma}[cf. Theorem~6 in~\cite{EJC}]
\label{lem:Laplas}
Let $C=L(G;{\mathbf x})$.  Then the right-hand side of formula~\eqref{eq:gauss} coincides with the right-hand side of formula~\eqref{Gau3}.
\end{lemma}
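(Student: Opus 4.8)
The plan is to express both sides of the claimed identity in terms of the single quantity $s(G/W^*(\mathbf x);\mathbf x)$ together with the rank $r$ of $C=L(G;\mathbf x)$, and then to check that the two sides agree factor by factor. The first ingredient is a bridge between principal minors of the weighted Laplacian and the tree sums of contractions. Fix $W\subseteq V(G)$ with $W\neq\emptyset$ and contract $W$ to a single vertex $w^*$, obtaining the connected multigraph $G/W$ on $(V(G)\setminus W)\cup\{w^*\}$. A direct inspection of $L(G/W;\mathbf x)=B\Lambda B^T$ shows that for $u,v\in V(G)\setminus W$ the off-diagonal entry is unchanged by the contraction (the edges joining $u$ and $v$ are untouched), and the diagonal entry at such a $u$ is unchanged as well (an edge from $u$ into $W$ merely becomes an edge from $u$ to $w^*$, and loops contribute nothing to $L$). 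Hence the principal block $L(G/W;\mathbf x)[V(G)\setminus W,\,V(G)\setminus W]$ equals $L(G;\mathbf x)[V(G)\setminus W,\,V(G)\setminus W]$, and applying the weighted Matrix--Tree Theorem to $G/W$ --- deleting the row and column of $w^*$ --- gives
$$s(G/W;\mathbf x)=\det L(G;\mathbf x)\bigl[V(G)\setminus W,\ V(G)\setminus W\bigr],\qquad W\neq\emptyset;$$
for $W=\emptyset$ the Matrix--Tree Theorem itself gives $s(G;\mathbf x)=\det L(G;\mathbf x)[V(G)\setminus\{v\},\,V(G)\setminus\{v\}]$ for every vertex $v$. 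Thus every principal minor of $C$ of order at most $|V(G)|-1$ is one of the numbers $s(G/W;\mathbf x)$, and conversely.

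Next I would identify the rank. Since $C$ is symmetric over $\mathbb F_3$ (characteristic $\neq 2$), its rank $r$ equals the largest order of a nonsingular principal submatrix: a coordinate subspace complementary to $\ker C$ exists, and $C$ restricted to it is nondegenerate because $\ker C$ is the radical of the bilinear form given by $C$. By the previous paragraph, $s(G/W;\mathbf x)\neq 0$ is equivalent to nonsingularity of $L(G;\mathbf x)[V(G)\setminus W,\,V(G)\setminus W]$, so minimality of $|W^*(\mathbf x)|$ means precisely that this largest order is $|V(G/W^*(\mathbf x))|-1$; here one uses $|V(G/W)|=|V(G)|-|W|+1$ for $W\neq\emptyset$, $|V(G/\emptyset)|=|V(G)|$, and the elementary remark that $|W^*(\mathbf x)|$ is never exactly $1$ (contracting a single vertex changes nothing). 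Hence $r=|V(G/W^*(\mathbf x))|-1$. Moreover, one may take the minor $C_r$ in Lemma~\ref{lem:gauss} to be $L(G;\mathbf x)[V(G)\setminus W^*(\mathbf x),\,V(G)\setminus W^*(\mathbf x)]$ (or $L(G;\mathbf x)[V(G)\setminus\{v\},\,V(G)\setminus\{v\}]$ when $W^*(\mathbf x)=\emptyset$), so that $\det C_r=s(G/W^*(\mathbf x);\mathbf x)$; by Corollary~\ref{concl2} we get $\left(\frac{\det C_r}{3}\right)=\left(\frac{s(G/W^*(\mathbf x);\mathbf x)}{3}\right)$ for every admissible choice of $C_r$, which in particular shows that $w_G(\mathbf x)$ does not depend on the chosen $W^*(\mathbf x)$.

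It then remains to match the prefactors. With the convention $\sqrt{-3}=-i\sqrt3$ we have $(-3)^{r/2}=(-i\sqrt3)^r=(-i)^r\,3^{r/2}$, and since $i(-i)=1$ we get $(-i)^{-r}=i^r$, hence
$$\frac{1}{(-3)^{r/2}}=\frac{i^r}{3^{r/2}}=\left[\frac{i}{\sqrt3}\right]^r.$$
Substituting $r=|V(G/W^*(\mathbf x))|-1$ and $\left(\frac{\det C_r}{3}\right)=\left(\frac{s(G/W^*(\mathbf x);\mathbf x)}{3}\right)$ turns the right-hand side of~\eqref{eq:gauss} into the right-hand side of~\eqref{Gau3}, which is the assertion.

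The step I expect to be the main obstacle is the middle one: the bookkeeping among $|W^*(\mathbf x)|$, $|V(G/W^*(\mathbf x))|$ and $r$ must be carried out carefully, especially in the degenerate case $W^*(\mathbf x)=\emptyset$, where the formula $|V(G/W)|=|V(G)|-|W|+1$ fails and one has to fall back on the Matrix--Tree Theorem for $G$ itself rather than on the contraction identity. The incidence-matrix computation, the rank fact for symmetric matrices, and the scalar identity for the prefactors are all routine.
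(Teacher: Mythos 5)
Your proof is correct and follows essentially the same route as the paper's: both rest on identifying the principal minor of $L(G;\mathbf x)$ with deleted index set $W$ as $s(G/W;\mathbf x)$ via the weighted Matrix--Tree theorem and the block structure of the contracted Laplacian, and then invoking Lemma~\ref{lem:gauss}. You merely spell out several steps the paper leaves implicit --- the fact that the rank of a symmetric matrix is the largest order of a nonsingular principal submatrix, the bookkeeping giving $r=|V(G/W^*(\mathbf x))|-1$ including the $W^*=\emptyset$ case, and the prefactor identity $(-3)^{-r/2}=[i/\sqrt3]^r$ --- all of which check out.
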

\begin{proof}
The principal minor of the matrix $L(G;{\mathbf x})$ equals the determinant of the submatrix obtained from $L(G;{\mathbf x})$ by deleting rows and columns, whose indices belong to the set $W$. According to Lemma~\ref{lem:gauss}, it suffices to prove that this minor coincides with $s(G/W;{\mathbf x})$.
This fact is well known in the case where $W=\emptyset$~(see, for example, \cite{MatricesAndGraphs}, as well as \cite{stanleyArticle} and references therein). The general case is reduced to a particular one,
because the matrix $L(G/W;{\mathbf x})$ is obtained from $L(G;{\mathbf x})$ by deleting the rows and columns corresponding to the vertices in $W$, and adding a row and column corresponding to the resulting contracted vertex. These new elements of the matrix $L(G/W;{\mathbf x})$ are fully determined by the remaining part of the matrix because the sum of the elements of any row or column in this matrix equals zero.
\end{proof}

\section{The Heawood theorem and the Fourier transform}

To prove Theorem~\ref{th:main} we need the Heawood representation for $\Tait_0(G)$ as the number of nowhere-zero solutions of a system of linear equations over~$\mathbb F_3$.  

\begin{proposition}[\cite{heawood}]
\label{th:heawood}
Let $G$ be a maximal planar graph. Let us associate each face $F$ of graph~$G$ with variable $\sigma(F)$, 
which takes on values in set $\mathbb{F}_3^*$.
Then $\Tait_0(G)$ equals the number of all possible sets of spins $(\sigma(F), F\in \F(G))$,
such that for any graph vertex~$v$, the sum $\sigma(F)$ calculated with respect to all faces~$F$ that contain~$v$, equals zero.
\end{proposition}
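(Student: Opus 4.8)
The plan is to exhibit an explicit $3$-to-$1$ correspondence between Tait colorings of $G$ and the admissible spin configurations, working throughout on the oriented $2$-sphere so that every triangular face $F$ carries a consistent counterclockwise boundary orientation. First I would encode a Tait coloring as a map $c\colon E(G)\to\mathbb{F}_3$ whose three values on each triangle are all distinct, and attach to each face the sign $\sigma(F)\in\{1,-1\}=\mathbb{F}_3^*$ recording the parity of the colour pattern read counterclockwise. The elementary but crucial observation is that for a rainbow triangle all three consecutive colour differences, taken counterclockwise, are equal to one common value in $\mathbb{F}_3^*$, and that this common value is exactly $\sigma(F)$; this is what lets the combinatorics of colours talk to the additive structure of $\mathbb{F}_3$ and to the notation $\mathbb{F}_3^*=\{\pm1\}$ used in the proposition.

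Next I would prove that every Tait coloring yields an admissible $\sigma$. Fix a vertex $v$ and list its incident edges $e_1,\dots,e_d$ in counterclockwise order, with $F_i$ the face spanned by $e_i,e_{i+1}$. Using the orientation picture at $v$, the counterclockwise boundary of $F_i$ meets $v$ so that $c(e_i)-c(e_{i+1})\equiv\sigma(F_i)\pmod 3$. Summing over $i$, the left-hand side telescopes to $0$ around the closed fan, whence $\sum_{F\ni v}\sigma(F)\equiv 0$. Thus the assignment $\Phi\colon c\mapsto\sigma$ lands in the admissible configurations. Moreover the cyclic group $C_3$ of global colour rotations $c\mapsto c+t$, $t\in\mathbb{F}_3$, acts freely on Tait colorings (a fixed point would force $c=c+t$) and leaves all colour differences, hence $\sigma$, unchanged; so $\Phi$ is constant on the $C_3$-orbits, each of size exactly $3$.

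The heart of the argument is to show that every admissible $\sigma$ has exactly three $\Phi$-preimages. For each face $F$ I would fix one reference rainbow colouring $c_F^0$ of its edges realizing the sign $\sigma(F)$; any rainbow colouring of $F$ with sign $\sigma(F)$ is then $c_F^0+t_F$ for a unique constant $t_F\in\mathbb{F}_3$. Gluing across a shared edge $e=F\cap F'$ forces $t_F-t_{F'}=c^0_{F'}(e)-c^0_F(e)=:\beta(e)$. Regarding $t$ as a potential on the vertices of the dual graph $G^\ast$ and $\beta$ as a prescribed difference along its edges, a compatible $t$ exists iff $\beta$ sums to zero around every cycle of $G^\ast$; since $G$ is planar, the cycle space of $G^\ast$ is generated by its facial cycles, which correspond precisely to the vertices of $G$. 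Running the rainbow-difference computation of the previous step on the reference colourings identifies the sum of $\beta$ around the star of $v$ with $\pm\sum_{F\ni v}\sigma(F)$, so solvability is equivalent to admissibility of $\sigma$. When it holds, connectivity of $G^\ast$ makes $t$ unique up to one global additive constant, giving exactly $3$ potentials, hence $3$ colourings. Summing over all $\sigma$ then gives $\Tait(G)=3\cdot\#\{\text{admissible }\sigma\}$, i.e.\ $\Tait_0(G)=\#\{\text{admissible }\sigma\}$.

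The hard part will be the orientation bookkeeping together with the duality identification in the last step: one must pin down a global counterclockwise convention on the sphere so that the local telescoping sign is the same at every vertex, and then verify that the obstruction to building the potential $t$ coincides term-by-term with the spin sum $\sum_{F\ni v}\sigma(F)$ rather than with some perturbed quantity. Everything else — the rainbow-difference lemma, the freeness of the $C_3$-action, and the connectivity of $G^\ast$ — is routine. I expect no essential difficulty from the outer face, since on the sphere all faces, including the one chosen as unbounded, are genuine triangles treated symmetrically.
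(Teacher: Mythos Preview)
The paper does not give its own proof of this proposition: it is quoted as Heawood's classical 1898 result, with pointers to Ore's book and to \cite{belaga}, \cite{arXiv1} for alternative arguments. So there is no ``paper's proof'' to compare against; your task was really to supply a proof the paper chose to omit.

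Your proposal is correct and is essentially the classical Heawood argument. The key ingredients --- the rainbow-difference observation that on a properly $3$-edge-coloured triangle the three cyclic colour increments are a common element of $\mathbb{F}_3^*$, the telescoping sum around a vertex to get $\sum_{F\ni v}\sigma(F)=0$, and the reconstruction of the colouring from $\sigma$ via a potential on the dual graph whose obstruction lives in $H_1(G^\ast;\mathbb{F}_3)$ and is detected on the facial cycles of $G^\ast$ (i.e.\ the vertex stars of $G$) --- are exactly the standard ones. Your identification of the cycle obstruction with $\pm\sum_{F\ni v}\sigma(F)$ via the reindexing $\sum_i\bigl(c^0_{F_i}(e_i)-c^0_{F_i}(e_{i+1})\bigr)$ is the right computation.

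The only place that genuinely needs care, as you yourself flag, is fixing one global orientation convention on the sphere and checking that the sign in $c(e_i)-c(e_{i+1})=\pm\sigma(F_i)$ is the \emph{same} $\pm$ for every $i$ around every vertex (and likewise in the reconstruction step). Once you commit to ``counterclockwise boundary of each face'' on an oriented $S^2$, this follows because at each vertex the induced boundary orientations of the incident faces all traverse the corner at $v$ in the same rotational sense. Everything else in your outline is routine.
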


See also~\cite[Theorem 9.3.4]{Ore} for the proof of this proposition; see~\cite{belaga} (as well as~\cite{arXiv1}) for its other proofs.

We also make use of some simple properties of the Fourier transform over field~${\mathbb F}_3$. 
Consider complex-valued functions $f(k)$, whose argument $k$ belongs to field $\mathbb{F}_3$.
The inverse Fourier transform of such functions is usually understood as the function $\widehat f(y)=\sum\limits_{k\in \mathbb{F}_3} f(k)\frac{\exp(2\pi i\, ky/3)}{3}$.

Let ${\mathbf 1}(k)$ be the function $f(k)$ that is identically equal to one; let symbol $\delta(y)$ denote the delta function (the Kronecker symbol):
$\delta(0)=1$, $\delta(y)=0$ with all $y\in\mathbb F_3^*$. We can easily make sure that 
\begin{equation}
\label{widehat1}
\widehat{{\mathbf 1}}(y)=\delta(y). 
\end{equation}
Relation~\eqref{widehat1} implies the following formula, which plays an important role in further calculations:
\begin{equation}
\label{alpha}
\sum_{y\in \mathbb{F}_3^*} \exp(2\pi i\, ky/3) =3\delta(k)-1=3\delta(k^2)-1=\sum_{a\in \mathbb{F}_3^*} \exp(2\pi i\, k^2 a/3).
\end{equation}

\begin{remark}
\label{rem:alpha}
In the case of a finite field, we can consider the function $f(k)=1-\delta(k)$ as the norm of an element of the finite field. Thus, the sum $\sum_{y\in \mathbb{F}_p^*} \exp(2\pi i\, ky/p)$ is the Fourier transform of the norm raised to a certain power.
An analog of this sum in the case of a real field is the Fourier transform of a (generalized) function $|k|^\gamma$.
In quantum field theory, such functions often represent the so-called propagators of Feynman amplitudes. 
The paper~\cite[p.~691]{syman} has given rise to the parametric representation of the integrals of propagators of Feynman amplitudes as integrals of the exponent.
In the mentioned paper, K.~Symanzik uses the symbol $\alpha$ for the analog of variable~$a$ introduced by us.
The technique based on the use of this representation in quantum field theory (in the next section we implement its simplified analog for~$\mathbb F_3$) is called the
$\alpha$-representation. Following this tradition, we will use the same notation.
\end{remark}

\section{The $\alpha$-representation for $\Tait_0(G)$}

\begin{proof}[Proof of theorem~\ref{th:main}]
According to the Heawood theorem (Proposition~\ref{th:heawood}), 
\begin{equation}
\label{eq:S}
\Tait_0(G)=\sum_{\sigma\in\{-1,1\}^{\F(G)}}\ \prod_{v\in V(G)} \delta(\sum_{F:v\in F} \sigma(F)).
\end{equation}

Let us modify the right-hand side of formula~\eqref{eq:S}, using the fact that each $\delta$-function represents the inverse Fourier transform of the function ${\mathbf 1}(\cdot)$ (see~\eqref{widehat1}). 
Representing the product of exponents as the exponent of the sum and changing the summation order, we conclude that
$$
\Tait_0(G)=\sum_{{\mathbf k}\in {\mathbb F}_3^{V(G)}}\ \sum_{\sigma\in\{-1,1\}^{\F(G)}} \exp\left(\frac{2\pi i}3 \, \sum_{v\in V(G)} k_v\sum_{F: v\in F} \sigma(F) \right)/
3^{|V(G)|}.
$$
We can represent the sum in the exponent in another way, namely, 
$$
\sum_{v\in V(G)} k_v\sum_{F: v\in F} \sigma(F) =\sum_{F\in \F(G)} \sigma(F) \sum_{v\in F} k_v.
$$
This allows us to use the formula~\eqref{alpha}. We obtain the relation
$$
\Tait_0(G)=\sum_{{\mathbf k}\in {\mathbb F}_3^{V(G)}}\ \sum_{\alpha\in\{-1,1\}^{\F(G)}} \exp\left(\frac{2\pi i}3 \, \sum_{F\in \F(G)} \alpha(F) \left( \sum_{v\in F} k_v\right)^2\ 
\right)/3^{|V(G)|}.
$$
The sum in the exponent can be expressed differently, specifically,
$$\sum_{F\in \F(G)} \alpha(F) \left( \sum_{v\in F} k_v\right)^2=\sum_{v_1\in V}\sum_{v_2 \in V}k_{v_1}k_{v_2}\sum_{\substack{F \in \F(G):\\v_1 \in F, v_2 \in F}}\alpha(F).$$
Taking into account the fact that $2(\alpha(F'_e)+\alpha(F''_e))=-(\alpha(F'_e)+\alpha(F''_e))$, we get the relation
$$
\Tait_0(G)=\sum_{\alpha\in\{-1,1\}^{\F(G)}}\, \sum_{\mathbf k\in {\mathbb F}_3^{V(G)}} \frac{ \exp( 2\pi i\,{\mathbf k}\, L({\mathbf x}(-\alpha))\,{\mathbf k}^T\ /3)}{3^{|V(G)|}}
=\sum_{\alpha\in\{-1,1\}^{\F(G)}} \frac{\Gau(L({\mathbf x}(-\alpha)))}{3^{|V(G)|}}
$$
(here $\mathbf k=(k_v, v\in V(G))$). Note that we iterate over all possible values of $\alpha$, thus we can replace ${\mathbf x}(-\alpha)$ with ${\mathbf x}(\alpha)$. In view of Corollary~\ref{concl3} and Lemma~\ref{lem:Laplas} this equality is equivalent to the assertion of Theorem~\ref{th:main}.
\end{proof}  

\section{Conclusion}
In the case of finite fields, the base of the $\alpha$-representation is the interpretation of the desired value as the number of nowhere-zero solutions to a system of linear equations. For the number of Tait colorings, this base is ensured by the Heawood theorem. 
The application of formula~\eqref{alpha} allows us to reduce further calculations to the evaluation of multidimensional Gaussian sums, for which the principal minors of the matrix of the quadratic form can be expressed explicitly in terms of the Legendre symbol. In the case of Tait colorings, we can visually interpret these minors as the sum with respect to spanning trees. 
We plan to further develop the $\alpha$-representation technique in this evident case, which is also related to the assertion of the Four Color Theorem.

\end{document}